\def\E{\mathbb{E}}
\newtheorem{thm}{Theorem}[section]
\newtheorem{lem}{Lemma}[section]
\theoremstyle{definition}
\theoremstyle{remark}
\newtheorem{rem}{Remark}[section]
\numberwithin{equation}{section}
\begin{document}

\title{Some identities for the Narayana polynomial and its derivatives}
\author{Nguyen Tien Dung\thanks{Department of Mathematics, VNU University of Science, Vietnam National University, Hanoi, 334 Nguyen
Trai, Thanh Xuan, Hanoi, 084 Vietnam. Email: dung@hus.edu.vn}\,\,\,\footnote{Department of Mathematics, FPT University, Hoa Lac High Tech Park, Hanoi, Vietnam.}}

\date{June 18, 2022}          % Ngay
%\address{Department of Mathematics, FPT University\\ Hoa Lac High Tech Park, Hanoi, Vietnam}
%\email{dung\_nguyentien10@yahoo.com, dungnt@fpt.edu.vn}
\maketitle
\begin{abstract}
In this note, by employing a nice property of semicircular distributions, we derive  some identities for the Narayana polynomial and its derivatives.
\end{abstract}
\noindent\emph{Keywords:} Narayana polynomials, Semicircular distribution.\\
{\em 2020 Mathematics Subject Classification:} 11B83,  	60C05.
%\section{Introduction}       % Muc dau tien
%{\large Densities, Tail probabilities and }
\section{Introduction}
The Narayana numbers $N(r,k)$ and the Narayana polynomials $\mathcal{N}_{r}(z)$ are defined by
\begin{equation}
N(r,k) = \frac{1}{r} \binom{r}{k-1} \binom{r}{k},
\label{nara-numb-def}
\end{equation}
\begin{equation}
%C_{r}(z) = \sum_{k=1}^{r} N(r,k) z^{k-1}
\mathcal{N}_{r}(z) = \sum_{k=1}^{r} N(r,k) z^{k-1}.
\label{nara-poly-def}
\end{equation}
Because of their applications in Combinatorics theory, the Narayana numbers and the Narayana polynomials have been investigated by several authors. There is a large number of combinatorial properties can be found in the literature, see \cite{Amdeberhan2013,Chen2010,Lassalle2012,Mansour2009} and references therein.

The main result of this paper can be formulated as follows.
\begin{thm}\label{hj}For every $r\geq 3,$ we have
\begin{equation}\label{ikldd}
\mathcal{N}_r(z)=\frac{2r-1}{r+1}(1+z)\mathcal{N}_{r-1}(z)-\frac{r-2}{r+1}(1-z)^2\mathcal{N}_{r-2}(z),
\end{equation}
\begin{equation}\label{ikld}
%\mathcal{N}_r(z)=(1-z)\mathcal{N}_{r-1}(z)+\frac{2z}{r-1}\mathcal{N}'_r(z)
\mathcal{N}'_r(z)=\frac{r-1}{2z}\mathcal{N}_r(z)+\frac{(r-1)(z-1)}{2z}\mathcal{N}_{r-1}(z),
\end{equation}
\begin{equation}\label{iklda}
\mathcal{N}''_r(z)=-\frac{r-1}{z^2}\mathcal{N}_r(z)+\left(\frac{(r-1)^2}{z}+\frac{r-1}{z^2}\right)\mathcal{N}_{r-1}(z).
%\mathcal{N}''_r(z)=\frac{2}{z}\mathcal{N}'_r(z)+\frac{2}{z}(r-1)\mathcal{N}_{r-1}(z)+2(r-1)N'_{r-1}(z)+(r-1)(r-2)\left(\frac{1}{z}-1\right)\mathcal{N}_{r-2}(z)
\end{equation}
\end{thm}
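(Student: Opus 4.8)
The plan is to derive all three identities from a single probabilistic representation of the Narayana polynomials as moments of a shifted semicircular variable. Let $X$ be a standard semicircular random variable, so that $X$ has density $\frac1{2\pi}\sqrt{4-x^2}$ on $[-2,2]$, $\E[X^{2k}]$ is the $k$-th Catalan number and $\E[X^{2k+1}]=0$; for a parameter $z>0$ put $S:=1+z+\sqrt z\,X$. The first step is to establish
\[
\mathcal N_r(z)=\E\!\left[S^{\,r-1}\right]\qquad(r\ge1),
\]
equivalently, that $\mathcal N_r(z)$ is the $r$-th moment of the Marchenko--Pastur law with ratio parameter $z$. I would obtain this by checking that both sides agree at $r=1$ and satisfy, for $r\ge2$, the same recursion $a_r=(1+z)a_{r-1}+z\sum_{i=1}^{r-2}a_ia_{r-1-i}$: on the right this comes from $\E[S^{r-1}]=(1+z)\E[S^{r-2}]+\sqrt z\,\E[XS^{r-2}]$ together with the free Stein identity $\E[Xg(X)]=(\E\otimes\E)\big[(g(X)-g(Y))/(X-Y)\big]$, with $Y$ an independent copy of $X$, applied to $g(x)=(1+z+\sqrt z\,x)^{r-2}$; on the left it is the classical quadratic recursion for the Narayana polynomials (equivalently, the quadratic equation satisfied by their generating function $\sum_{r\ge1}\mathcal N_r(z)t^r$). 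Since \eqref{ikldd}, \eqref{ikld}, \eqref{iklda} are polynomial identities in $z$, it is enough to prove them for $z>0$, where this representation is available.

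For \eqref{ikld} I would simply differentiate the representation in $z$. Since $\partial_z S=1+\frac{X}{2\sqrt z}$,
\[
\mathcal N_r'(z)=(r-1)\,\E[S^{r-2}]+\frac{r-1}{2\sqrt z}\,\E[XS^{r-2}],
\]
where $\E[S^{r-2}]=\mathcal N_{r-1}(z)$, while $\sqrt z\,X=S-1-z$ gives $\E[XS^{r-2}]=\tfrac1{\sqrt z}\big(\mathcal N_r(z)-(1+z)\mathcal N_{r-1}(z)\big)$. Substituting these and reducing the coefficient of $\mathcal N_{r-1}(z)$ via $(r-1)\big(1-\tfrac{1+z}{2z}\big)=\tfrac{(r-1)(z-1)}{2z}$ yields \eqref{ikld} at once.

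For \eqref{ikldd} I would bring in the genuinely semicircular input: integrating by parts against the semicircular density and using $\frac{d}{dx}(4-x^2)^{3/2}=-3x(4-x^2)^{1/2}$ (the boundary terms vanish) gives, for every polynomial $f$,
\[
\E\!\left[(4-X^2)f'(X)\right]=3\,\E\!\left[Xf(X)\right].
\]
Applying this with $f(x)=(1+z+\sqrt z\,x)^{r}$, so that $f'(x)=r\sqrt z\,S^{r-1}$, and then eliminating $X$ throughout by $\sqrt z\,X=S-1-z$ and $zX^2=(S-1-z)^2$, one obtains a linear relation among $\E[S^{r+1}],\E[S^{r}],\E[S^{r-1}]$, that is, among $\mathcal N_{r+2}(z),\mathcal N_{r+1}(z),\mathcal N_r(z)$. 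Using $4z-(1+z)^2=-(1-z)^2$ it simplifies to $(r+3)\mathcal N_{r+2}(z)=(2r+3)(1+z)\mathcal N_{r+1}(z)-r(1-z)^2\mathcal N_r(z)$, and replacing $r$ by $r-2$ (legitimate for $r\ge3$) is exactly \eqref{ikldd}.

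Finally, \eqref{iklda} follows by differentiating \eqref{ikld} once more in $z$ and substituting \eqref{ikld}, both at index $r$ and at index $r-1$, for the two first derivatives that appear; this leaves an expression in $\mathcal N_r(z),\mathcal N_{r-1}(z),\mathcal N_{r-2}(z)$, and one removes $\mathcal N_{r-2}(z)$ using \eqref{ikldd} rewritten as $(r-2)(1-z)^2\mathcal N_{r-2}(z)=(2r-1)(1+z)\mathcal N_{r-1}(z)-(r+1)\mathcal N_r(z)$. A short computation then shows the coefficient of $\mathcal N_r(z)$ collapses to $-\tfrac{r-1}{z^2}$ and that of $\mathcal N_{r-1}(z)$ to $\tfrac{(r-1)^2}{z}+\tfrac{r-1}{z^2}$, which is \eqref{iklda}. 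The one step that really requires care is the representation $\mathcal N_r(z)=\E[S^{r-1}]$ — identifying the Catalan/binomial expansion of $\E[(1+z+\sqrt z\,X)^{r-1}]$ with the Narayana coefficients, for which the recursion-matching above seems the cleanest route; once it is in hand, \eqref{ikld}--\eqref{iklda} cost only a differentiation in $z$, a single integration by parts, and elementary (if mildly tedious) algebra.
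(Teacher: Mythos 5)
Your proposal is correct and follows essentially the same route as the paper: the moment representation $\mathcal{N}_r(z)=\E[(1+z+2\sqrt{z}X)^{r-1}]$ for a semicircular $X$ (which the paper simply cites from Amdeberhan--Moll--Vignat while you reprove it by recursion matching), combined with the integration-by-parts identity $\E[Xf(X)]=\tfrac{1}{3}\E[(1-X^2)f'(X)]$, which is exactly the paper's key lemma in your normalization. The only notable divergence is that you obtain \eqref{iklda} purely algebraically by differentiating \eqref{ikld} and eliminating $\mathcal{N}_{r-2}(z)$ via \eqref{ikldd}, whereas the paper differentiates the moment representation twice; both computations check out.
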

The proof of the above theorem is deferred to Section \ref{gh3}. Let us end up this Section with some remarks.
\begin{rem}In his paper \cite{Lassalle2012}, Lasalle established the following recurrence
\begin{equation}
(z+1)\mathcal{N}_{r}(z) - \mathcal{N}_{r+1}(z)
= \sum_{n \geq 1} (-z)^{n} \binom{r-1}{2n-1}
A_{n} \mathcal{N}_{r-2n+1}(z),
\label{def0-A}
\end{equation}
where the numbers $A_{n}$ satisfies the recurrence
\begin{equation*}
(-1)^{n-1}A_{n} = C_{n} +
\sum_{j=1}^{n-1} (-1)^{j} \binom{2n-1}{2j-1} A_{j}C_{n-j},
\end{equation*}
\noindent
with $A_{1} = 1$ and $C_{n} = \frac{1}{n+1} \binom{2n}{n}$ the Catalan number. Interestingly, our relation (\ref{ikldd}) is simpler than (\ref{def0-A}).
\end{rem}
\begin{rem}From (\ref{ikldd}) and (\ref{ikld}) we obtain the following recurrence for the first order derivative
$$\mathcal{N}'_r(z)=\frac{r-1}{2r+2}\frac{r-2+3rz}{z}\mathcal{N}_{r-1}(z)-\frac{(r-1)(r-2)}{2r+2}\frac{(1-z)^2}{z}\mathcal{N}_{r-2}(z).$$
Similarly, we can express $\mathcal{N}''_r(z)$ in term of $\mathcal{N}_{r-1}(z)$ and $\mathcal{N}_{r-2}(z).$
\end{rem}
\begin{rem} It is known that $C_r:=\mathcal{N}_r(1),r\geq 1$ is the sequence of the Catalan numbers. Hence, by choosing $z=1,$ we obtain from (\ref{ikldd}), (\ref{ikld}) and (\ref{iklda}) the following identities
$$C_r=\frac{4r-2}{r+1}C_{r-1},$$
$$\sum_{k=1}^{r} (k-1)N(r,k)=\frac{r-1}{2}C_r,$$
$$\sum_{k=1}^{r} (k-1)(k-2)N(r,k)=-(r-1)C_r+r(r-1)C_{r-1}.$$
Similarly, by choosing $z=2,$ we also obtain the identities for the sequence of the large Schr\"oder numbers $S_r:=\frac{1}{2}\mathcal{N}_r(2),r\geq 1.$
\end{rem}
\section{Proofs}\label{gh3}
Let $X$ be a  random variable with the semicircular distribution, i.e. $X$ has a density function $f(x)$ given by
\begin{equation}
f(x) = \begin{cases}
        \frac{2}{\pi} \sqrt{1-x^{2}} & \quad \text{ if } -1 \leq x \leq 1 \\
       0  & \quad \text{ otherwise}.
     \end{cases}
\end{equation}
It is known from \cite{Amdeberhan2013} that the Narayana polynomial can be expressed as follows
\begin{equation}\label{nana}
\mathcal{N}_r(z)=\E[(1+z+2\sqrt{z}X)^{r-1}],
\end{equation}
where $\E$ denotes the expectation operator.

In order to prove Theorem \ref{hj}, we need to use the following property of semicircular distributions.
\begin{lem}\label{opk} For any continuously differentiable function $h,$ we have
$$\E[h(X)X]=\frac{1}{3}\E[h'(X)(1-X^2)].$$
\end{lem}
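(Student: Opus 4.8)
The plan is to prove the identity $\E[h(X)X]=\tfrac13\E[h'(X)(1-X^2)]$ by a direct computation with the explicit semicircular density, using integration by parts. First I would write the left-hand side as $\E[h(X)X]=\tfrac{2}{\pi}\int_{-1}^{1} h(x)\,x\sqrt{1-x^2}\,dx$. The key observation is that the factor $x\sqrt{1-x^2}$ is, up to a constant, the derivative of $(1-x^2)^{3/2}$: indeed $\frac{d}{dx}\big[(1-x^2)^{3/2}\big] = \tfrac32(1-x^2)^{1/2}\cdot(-2x) = -3x\sqrt{1-x^2}$. Hence $x\sqrt{1-x^2} = -\tfrac13\frac{d}{dx}(1-x^2)^{3/2}$, and I can integrate by parts with $u=h(x)$ and $dv = x\sqrt{1-x^2}\,dx$, i.e. $v = -\tfrac13(1-x^2)^{3/2}$.

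The boundary term $\big[-\tfrac13 h(x)(1-x^2)^{3/2}\big]_{-1}^{1}$ vanishes because $(1-x^2)^{3/2}=0$ at $x=\pm1$ (this is where we use that the exponent $3/2$ is positive, so no issue even though $h$ is only assumed continuously differentiable on, say, a neighbourhood of $[-1,1]$). What remains is
\[
\E[h(X)X] = \frac{2}{\pi}\cdot\frac13\int_{-1}^{1} h'(x)(1-x^2)^{3/2}\,dx = \frac13\cdot\frac{2}{\pi}\int_{-1}^{1} h'(x)(1-x^2)\sqrt{1-x^2}\,dx = \frac13\,\E[h'(X)(1-X^2)],
\]
which is exactly the claimed identity, since $(1-x^2)\cdot\tfrac{2}{\pi}\sqrt{1-x^2} = (1-x^2)f(x)$ is the relevant weight.

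I do not expect any serious obstacle here; the only point requiring a word of care is the justification that the boundary terms vanish and that the integrals converge, which is immediate from the compact support of $f$ and the continuity of $h$ and $h'$ on $[-1,1]$. One could alternatively phrase this via the general Stein-type characterization of the semicircular law (the density $f$ satisfies $f'(x)/f(x) = -3x/(1-x^2)$ on the interior of its support, which is precisely the differential relation that produces this integration-by-parts formula), but the direct computation above is the cleanest route and is what I would write out.
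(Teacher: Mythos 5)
Your proof is correct and follows essentially the same route as the paper: the paper integrates by parts against $F(x)=\int_x^1 yf(y)\,dy=\frac{2}{3\pi}(1-x^2)^{3/2}$, which is exactly your antiderivative $v=-\tfrac13(1-x^2)^{3/2}$ up to the constant $\tfrac{2}{\pi}$. The boundary-term observation and the final identification with $\tfrac13\E[h'(X)(1-X^2)]$ match the paper's argument.
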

\begin{proof}We observe that
$$F(x):=\int_x^1yf(y)dy=\frac{2}{\pi}\int_x^1y\sqrt{1-y^{2}}dy=\frac{2}{3\pi}(1-x^2)\sqrt{1-x^{2}},\,\,-1\leq x\leq 1.$$
Hence, by integrating by parts, we obtain
\begin{align*}
\E[h(X)X]&=\int_{-1}^1h(x)xf(x)dx=-\int_{-1}^1h(x)dF(x)\\
&=\int_{-1}^1F(x)h'(x)dx=\int_{-1}^1\frac{1}{3}(1-x^2)h'(x)f(x)dx\\
&=\frac{1}{3}\E[h'(X)(1-X^2)].
\end{align*}
The proof of the lemma is complete.
\end{proof}

We now are ready to present the proof of Theorem \ref{hj}.

\noindent {\it Proof of (\ref{ikldd}).}  Using the representation (\ref{nana}), we have
\begin{align*}
\mathcal{N}_r(z)&=\E[(1+z+2\sqrt{z}X)^{r-2}(1+z+2\sqrt{z}X)]\\
&=(1+z)\mathcal{N}_{r-1}(z)+2\sqrt{z}\E[(1+z+2\sqrt{z}X)^{r-2}X],
\end{align*}
and hence,
\begin{equation}\label{io}
\mathcal{N}_r(z)-(1+z)\mathcal{N}_{r-1}(z)=2\sqrt{z}\E[(1+z+2\sqrt{z}X)^{r-2}X].
\end{equation}
Furthermore, by using Lemma \ref{opk} and the relation (\ref{io}), we deduce
\begin{align*}
\mathcal{N}_r(z)-(1+z)\mathcal{N}_{r-1}(z)
&=\frac{4(r-2)}{3}z\E[(1+z+2\sqrt{z}X)^{r-3}(1-X^2)].
\end{align*}
This gives us the following
\begin{equation}\label{ioa}
z\E[(1+z+2\sqrt{z}X)^{r-3}X^2]=-\frac{3}{4(r-2)}\left(\mathcal{N}_r(z)-(1+z)\mathcal{N}_{r-1}(z)\right)+z\mathcal{N}_{r-2}(z).
\end{equation}
Once again we use the representation (\ref{nana}) to get
\begin{align*}
\mathcal{N}_r(z)&=\E[(1+z+2\sqrt{z}X)^{r-3}(1+z+2\sqrt{z}X)^2]\\
&=(1+z)^2\mathcal{N}_{r-2}(z)+4(1+z)\sqrt{z}\E[(1+z+2\sqrt{z}X)^{r-3}X]+4z\E[(1+z+2\sqrt{z}X)^{r-3}X^2],
\end{align*}
which, together with (\ref{io}) and (\ref{ioa}), yields
\begin{align*}
\mathcal{N}_r(z)&=(1+z)^2\mathcal{N}_{r-2}(z)+2(1+z)\left(\mathcal{N}_{r-1}(z)-(1+z)\mathcal{N}_{r-2}(z)\right)\\
&-\frac{3}{r-2}\left(\mathcal{N}_r(z)-(1+z)\mathcal{N}_{r-1}(z)\right)+4z\mathcal{N}_{r-2}(z).
\end{align*}
So it holds that
$$\frac{r+1}{r-2}\mathcal{N}_r(z)=\frac{2r-1}{r-2}(1+z)\mathcal{N}_{r-1}(z)-(1-z)^2\mathcal{N}_{r-2}(z)$$
and (\ref{ikldd}) follows.

\noindent {\it Proof of (\ref{ikld}).} It follows from the representation (\ref{nana}) that
$$\mathcal{N}'_r(z)=(r-1)\E\left[(1+z+2\sqrt{z}X)^{r-2}\left(1+\frac{X}{\sqrt{z}}\right)\right],$$
which implies that
\begin{equation}\label{ioo}
\mathcal{N}'_r(z)-(r-1)\mathcal{N}_{r-1}(z)=\frac{r-1}{\sqrt{z}}\E\left[(1+z+2\sqrt{z}X)^{r-2}X\right].
\end{equation}
Combining (\ref{io}) and (\ref{ioo}) yields
$$\mathcal{N}'_r(z)-(r-1)\mathcal{N}_{r-1}(z)=\frac{r-1}{2z}\left(\mathcal{N}_r(z)-(1+z)\mathcal{N}_{r-1}(z)\right).$$
So (\ref{ikld}) follows.

\noindent {\it Proof of (\ref{iklda}).} We have
\begin{align*}
N''_r(z)&=(r-1)(r-2)\E\left[(1+z+2\sqrt{z}X)^{r-3}\left(1+\frac{X}{\sqrt{z}}\right)^2\right]\\
&-\frac{r-1}{2}\E\left[(1+z+2\sqrt{z}X)^{r-2}\frac{X}{z\sqrt{z}}\right]\\
&=(r-1)(r-2)\E\left[(1+z+2\sqrt{z}X)^{r-3}\left(1+\frac{2X}{\sqrt{z}}+\frac{X^2}{z}\right)\right]\\
&-\frac{r-1}{2}\E\left[(1+z+2\sqrt{z}X)^{r-2}\frac{X}{z\sqrt{z}}\right].
\end{align*}
Hence, recalling the relations (\ref{io}) and (\ref{ioa}), we obtain
\begin{align*}
\mathcal{N}''_r(z)&=(r-1)(r-2)\bigg(\mathcal{N}_{r-2}(z)+\frac{1}{z}\left(\mathcal{N}_{r-1}(z)-(1+z)\mathcal{N}_{r-2}(z)\right)\\
&-\frac{3}{4(r-2)z^2}\left(\mathcal{N}_r(z)-(1+z)\mathcal{N}_{r-1}(z)\right)+\frac{1}{z}\mathcal{N}_{r-2}(z)\bigg)\\
&-\frac{r-1}{4z^2}\left(\mathcal{N}_r(z)-(1+z)\mathcal{N}_{r-1}(z)\right).
\end{align*}
Rearranging the terms we get
$$\mathcal{N}''_r(z)=-\frac{r-1}{z^2}\mathcal{N}_r(z)+\left(\frac{(r-1)^2}{z}+\frac{r-1}{z^2}\right)\mathcal{N}_{r-1}(z).$$
Thus the proof of (\ref{iklda}) is complete.

\end{document}